\newtheorem{proposition}{Proposition}[section]
\newtheorem{corollary}[proposition]{Corollary}
\newtheorem{theorem}[proposition]{Theorem}
\theoremstyle{definition}
\newcommand{\thlabel}[1]{\label{th:#1}}
\newcommand{\thref}[1]{Theorem~\ref{th:#1}}
\newcommand{\selabel}[1]{\label{se:#1}}
\newcommand{\seref}[1]{Section~\ref{se:#1}}
\newcommand{\prlabel}[1]{\label{pr:#1}}
\newcommand{\prref}[1]{Proposition~\ref{pr:#1}}
\newcommand{\eqlabel}[1]{\label{eq:#1}}
\newcommand{\equref}[1]{(\ref{eq:#1})}
\def\ot{\otimes}
\def\NN{{\mathbb N}}
\def\CC{{\mathbb C}}
\newcommand{\Cc}{\mathcal{C}}
\def\*C{{}^*\hspace*{-1pt}{\Cc}}
\def\text#1{{\rm {\rm #1}}}
\begin{document}

\title[Free Poisson Hopf algebras generated by coalgebras]{Free Poisson Hopf algebras generated by coalgebras}

\author{A. L. Agore}
\address{Faculty of Engineering, Vrije Universiteit Brussel, Pleinlaan 2, B-1050 Brussels, Belgium \textbf{and} Department of Applied Mathematics, Bucharest
University of Economic Studies, Piata Romana 6, RO-010374
Bucharest 1, Romania} \email{ana.agore@vub.ac.be and
ana.agore@gmail.com}

\thanks{The author is supported by an \emph{Aspirant} Fellowship from the Fund for Scientific
Research-Flanders (Belgium) (F.W.O. Vlaanderen). This research is
part of the grant no.88/05.10.2011 of the Romanian National
Authority for Scientific Research, CNCS-UEFISCDI}

\keywords{bialgebra, Hopf algebra, Poisson Hopf algebra,
coproduct, colimit, cocomplete, (co)refective subcategory,
generators}

\subjclass[2010]{18A30, 18A35, 18A40}

\begin{abstract}
We construct the analogue of Takeuchi's free Hopf algebra in the
setting of Poisson Hopf algebras. More precisely, we prove that
there exists a free Poisson Hopf algebra on any coalgebra or,
equivalently that the forgetful functor from the category of
Poisson Hopf algebras to the category of coalgebras has a left
adjoint. In particular, we also prove that the category of Poisson
Hopf algebras is a reflective subcategory of the category of
Poisson bialgebras. Along the way, we describe coproducts and
coequalizers in the category of Poisson Hopf algebras, therefore
showing that the latter category is cocomplete.
\end{abstract}

\maketitle

\section*{Introduction}
A Poisson Hopf algebra is both a Poisson algebra and a Hopf
algebra such that the comultiplication and the counit are
morphisms of Poisson algebras. Such objects are situated at the
border between Poisson geometry \cite{KS} and quantum groups
\cite{Dri}. Poisson structures are known for making striking and
unexpected appearances in a variety of different fields of
mathematics or mathematical physics such as differential geometry,
both classical and quantum mechanics, Lie groups and
representation theory, algebraic geometry, etc. Furthermore, as it
turns out, many important objects carry not only a Poisson
structure but also a natural Poisson Hopf algebra structure; we
only mention here the algebra of smooth functions on a Poisson
group or the polynomial algebra $\CC[\mathfrak{g}^{*}]$ of a
finite dimensional complex simple Lie algebra $\mathfrak{g}$, see
e.g. \cite{Dri, Gav1, KS, Ta} for more details and further
examples. On the other hand, there are also examples of Hopf
algebras carrying an induced Poisson structure which makes it into
a Poisson Hopf algebra. For instance, the graded algebra
associated to any connected Hopf algebra is such an example
\cite{zhu}. It is therefore important to achieve a good
understanding of these structures as well as to be able to
construct new ones. The present paper is a contribution to the
study of Poisson Hopf algebras from an algebraic point of view.
More precisely, in light of the increasing interest shown recently
in the category of Hopf algebras (see \cite{A1, A2, AC1, AC2, Kaw,
HP, HP2, HP3, RW} and the references therein), we will study the
categorical properties of Poisson Hopf algebras. As we will see,
the category $k$-PoissBiAlg of Poisson bialgebras (and therefore
the category of Poisson Hopf algebras) is not as friendly as the
category $k$-BiAlg of bialgebras in the sense that it does not
enjoy the same nice symmetry. More precisely, it is well-known
that for any field $k$ the category $k$-BiAlg of bialgebras is
isomorphic to $\mathbf{Mon}(\mathbf{Comon}({}_k{\mathcal {M}}))$
as well as to $\mathbf{Comon}(\mathbf{Mon}({}_k{\mathcal {M}}))$
while the category $k$-PoissBiAlg is only isomorphic to
$\mathbf{Comon}(k{\rm -Poiss})$, where $\mathbf{Mon}(\mathcal{C})$
and $\mathbf{Comon}(\mathcal{C})$ denote the monoids, respectively
the comonoids, of a given category $\mathcal{C}$ (see for instance
\cite{HP3}) and $k$-Poiss stands for the category of Poisson
algebras. Therefore, the main drawback in studying Poisson Hopf
algebras is that the duality arguments usually employed for Hopf
algebras do not hold anymore.

An outline of the paper is as follows. In \seref{1} we introduce
the notation and recall briefly the basic concepts needed in the
sequel. \seref{2} contains the explicit description of coproducts
and coequalizers in the categories of Poisson algebras, Poisson
bialgebras as well as Poisson Hopf algebras. As we will see, all
these constructions rely on those performed in the category of
Poisson algebras. The main results of the paper, namely the
explicit constructions of the free Poisson Hopf algebra generated
by a coalgebra, respectively a Poisson bialgebra, are proved in
\seref{3}. The constructions in this section parallel those in
\cite{T}. However, it is worth pointing out that the construction
of the free Poisson Hopf algebra on a coalgebra does not follow
trivially from Takeuchi's construction. What we mean, precisely,
is that we do not merely put a Lie algebra structure on Takeuchi's
free Hopf algebra, the free Poisson Hopf algebra on a Poisson
bialgebra being obtained by a different construction. The paper
ends with some open problems concerning the existence of limits
and cofree objects in the above mentioned categories. Another
important issue, also connected to the existence of cofree
objects, which still needs to be addressed is the injectivity
(resp. surjectivity) of monomorphisms (resp. epimorphisms) in the
category of Poisson Hopf algebras.

\section{Preliminaries}\selabel{1}

Throughout this paper, $k$ will be a field. Unless specified
otherwise, all vector spaces, tensor products, homomorphisms,
algebras, coalgebras, bialgebras, Lie algebras, Poisson algebras,
Hopf algebras and Poisson Hopf algebras are over $k$ and all
algebras are considered to be unital. Our notation for the
standard categories is as follows: ${}_k{\mathcal {M}}$
($k$-vector spaces), $k$-Alg (associative unital $k$-algebras),
$k$-Lie (Lie algebras over $k$), $k$-Poiss (Poisson algebras over
$k$), $k$-BiAlg (bialgebras over $k$), $k$-HopfAlg (Hopf algebras
over $k$), $k$-BiAlgPoiss (Poisson bialgebras over $k$),
$k$-HopfPoiss (Poisson Hopf algebras over $k$). For a coalgebra
$C$, we will use Sweedler's $\Sigma$-notation $\Delta(c)=
c_{(1)}\ot c_{(2)}$ with suppressed summation sign. $C^{\rm cop}$
stands for the coopposite of the coalgebra $C$. For a coalgebra
$C$ and an algebra $A$, ${ \rm Hom}_{k}(C, A)$ becomes an algebra
with respect to the convolution product $*$; more precisely, we
have $(f * g)(c) = f(c_{(1)})g(c_{(2)})$. If $A$ is an algebra
then the vector space $A$ together with the product $[-,\,-] : A
\times A \to A$ defined by $[a, \, b] = ab - ba$ is a Lie algebra
denoted by $A^{-}$. Given a vector space $V$, $(T(V), \, i)$
stands for the \emph{tensor algebra} on $V$, where $i : V \to
T(V)$ is the canonical inclusion. The Lie subalgebra of $T(V)^{-}$
generated by $i(V)$ is called the \emph{free Lie algebra}
generated by $V$ and will be denoted by $L(V)$. The
\emph{symmetric algebra} on $V$ will be denoted by $(S(V), \, i)$.
If $\mathfrak{g}$ is a Lie algebra with bracket $[\cdot, \,
\cdot]$ then the symmetric algebra $S(\mathfrak{g})$ inherits a
Poisson algebra structure $\{\cdot, \, \cdot\}$ in a canonical
way, namely $\{g, \, h\} = [g, \, h]$ for all $g$, $h \in
\mathfrak{g}$. Furthermore, we denote by $(\mathcal{P}(V), \,
\overline{i})$ the \emph{free Poisson algebra} on $V$, where
$\mathcal{P}(V) = S\bigl(L(V)\bigl)$ and $\overline{i}: V \to
\mathcal{P}(V)$ is the canonical inclusion; see \cite{freePo} and
the references therein for more details on the free Poisson
algebra. As the terminology suggests, the functor sending a vector
space $V$ to the free Poisson algebra $\mathcal{P}(V)$ provides a
left adjoint to the forgetful functor $U: k$-Poiss $\to
{}_k{\mathcal {M}}$.

Let us recall briefly some well known results pertaining to
category theory, referring the reader to \cite{AR, ML} for more
details. A category $\mathcal{C}$ is called \textit{(co)complete}
if all diagrams in $\mathcal{C}$ have (co)limits in $\mathcal{C}$.
A category $\mathcal{C}$ is \textit{(co)complete} if and only if
$\mathcal{C}$ has (co)equalizers of all pairs of arrows and all
(co)products \cite[Theorem 6.10]{par}. For a more detailed
discussion concerning the completeness and cocompleteness of some
of the categories mentioned above we refer the reader to \cite{A1,
A2, HP, HP2}. A full subcategory $\mathcal{D}$ of $\mathcal{C}$ is
called \textit{(co)reflective} in $\mathcal{C}$ when the inclusion
functor $U: \mathcal{D} \rightarrow \mathcal{C}$ has a (right)
left adjoint. A very convenient way of proving, in a constructive
way, that a given covariant functor $F: \mathcal{C} \to
\mathcal{D}$ has a left adjoint is by showing that for any object
$X \in \mathcal{D}$ the co-universal problem generated by $X$ and
$F$ has a co-universal solution. More precisely, given $X \in
\mathcal{D}$, a co-universal solution for the co-universal problem
generated by $X$ and $F$ consists of an object $G(X) \in
\mathcal{C}$ and a map $i: X \to F(G(X))$ in $\mathcal{D}$ such
that for each $Y \in \mathcal{C}$ and for each map $f: X \to F(Y)$
in $\mathcal{D}$ there is a unique map $g: G(X) \to Y$ in
$\mathcal{C}$ such that the following diagram commutes:
$$
\xymatrix  {& X \ar[r]^-{i} \ar[dr]_-{f} & {F(G(X))} \ar[d]^-{F(g)}\\
& {} & F(Y) }
$$
If for each $X \in \mathcal{D}$ the co-universal problem defined
by $X$ and $F$ has a co-universal solution, then $G: \mathcal{D}
\to \mathcal{C}$ defines a functor which is a left adjoint to $F$
\cite{P}.

Recall that a \emph{Poisson algebra} is both an associative
commutative algebra and a Lie algebra living on the same vector
space $P$ such that any hamiltonian $[p, -] : P \to P$ is a
derivation of the associative algebra P, i.e. for all $p$, $q$, $r
\in P$ we have:
$$[p,\, qr] = [p,\,q]\,r + q\, [p,\, r]$$

If $P_{1}$, $P_{2}$ are Poisson algebras then $P_{1} \otimes
P_{2}$ has a Poisson algebra structure defined for all $p$, $r\in
P_1$ and $q$, $s\in P_2$ by:
\begin{eqnarray}
(p \otimes q) \cdot (r \otimes s) := pr \otimes qs, \qquad \left[p
\otimes q, \, r \otimes s\right]_{P \ot P} := pr \otimes [q, \, s]
+ [p, \, r] \otimes qs \eqlabel{1.2}
\end{eqnarray}
A linear map $f: P_{1} \to P_{2}$ is called a \emph{morphism of
Poisson algebras} if $f$ is both an algebra map as well as a Lie
algebra map. Furthermore, the category $k$-Poiss of Poisson
algebras is in fact a monoidal category with the tensor product
defined above. A \emph{Poisson ideal} is a linear subspace which
is both an ideal with respect to the associative product as well
as a Lie ideal. If $\mathcal{I}$ is a Poisson ideal of $P$ then
$P/\mathcal{I}$ inherits a Poisson algebra structure in the
obvious way.

A commutative bialgebra $B$ together with a Poisson bracket
$[\cdot, \, \cdot]_{B}$ is called a \emph{Poisson bialgebra} if
the comultiplication $\Delta_{B}$ and the counit $\varepsilon_{B}$
are Poisson algebra maps, i.e. besides from being algebra maps,
for all $a$, $b \in B$ we also have:
\begin{equation}\eqlabel{defPH}
\Delta_{B}\bigl([a, \, b]\bigl) = [\Delta_{B}(a), \,
\Delta_{B}(b)]_{B \ot B}, \qquad \varepsilon_{B}\bigl([a, \,
b]\bigl) = [\varepsilon_{B}(a), \, \varepsilon_{B}(b)]_{k}
\end{equation}
Let us observe that the second compatibility in \equref{defPH} is
trivially fulfilled as for all $a$, $b \in B$ we have
$\varepsilon_{B}\bigl([a, \, b]\bigl) = 0$ (see \cite{LWZ}).
Furthermore, if $B$ is a Hopf algebra then $B$ is called a
\emph{Poisson Hopf algebra}. It is straightforward to see that the
antipode $S_{B}$ is a Poisson algebra anti-morphism, i.e. for all
$a$, $b \in B$ we have: $S_{B}\bigl([a, \, b]_{B}\bigl) =
[S_{B}(b), \, S_{B}(a)]_{B}$. A \emph{morphism of Poisson
bialgebras} is both a morphism of Poisson algebras and a morphism
of coalgebras. A morphism of Poisson bialgebras between two
Poisson Hopf algebras is automatically a Poisson Hopf morphism and
therefore $k$-HopfPoiss is a full subcategory of the category
$k$-BiAlgPoiss. It is straightforward to see that if $(H, m, \eta,
\Delta, \varepsilon, S, [-,\,-])$ is a Poisson Hopf algebra then
$(H^{\rm cop}, m, \eta, \Delta, \varepsilon, S, [-,\,-]^{\rm
op})$, where $[a, \, b]^{\rm op} = [b,\, a]$, is again a Poisson
Hopf algebra which we will denote by $H^{\rm op, cop}$. We refer
to \cite{Sw} for further details concerning Hopf algebras and to
\cite{LPV} for a comprehensive treatment of Poisson algebras from
both algebraic and geometrical point of view.

\section{Colimits in the category of Poisson Hopf algebras}\selabel{2}
The aim of this section is to construct coproducts and
coequalizers in the category $k$-PoissHopf of Poisson Hopf
algebras. Since there is a close connection between these
constructions and the ones corresponding to the category $k$-Poiss
of Poisson algebras we start by investigating the latter category.
Our next result constructs coproducts and coequalizers in the
category of Poisson algebras; as we will see, these constructions
are obtained by modifying properly the objects which provide the
coproducts respectively the coequalizers in the category $k$-Alg
of algebras \cite[page 50]{P}.

\begin{proposition}\prlabel{0.0.1}
The category $k$-Poiss of Poisson algebras has arbitrary
coproducts and coequalizers. In particular, the category $k$-Poiss
of Poisson algebras is cocomplete.
\end{proposition}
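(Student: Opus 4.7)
The plan is to apply the standard criterion \cite[Theorem 6.10]{par} quoted in the preliminaries, reducing cocompleteness to the explicit construction of arbitrary coproducts and of coequalizers of pairs of parallel morphisms in $k$-Poiss. Both constructions will mirror the classical ones in $k$-Alg \cite[p.~50]{P}, but with two systematic replacements: the tensor algebra is replaced by the free Poisson algebra functor $\Pp(-) = S(L(-))$ recalled in \seref{1}, and (associative) ideals are replaced by Poisson ideals. That $\Pp(-)$ is left adjoint to the forgetful functor to ${}_k\Mm$, and that the quotient of a Poisson algebra by a Poisson ideal is again a Poisson algebra, are both stated in the preliminaries and will be used repeatedly.

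For coequalizers, given parallel Poisson morphisms $f, g : P \to Q$, I would let $\mathcal{I}$ be the smallest Poisson ideal of $Q$ containing $\{f(p) - g(p) : p \in P\}$ and take $Q/\mathcal{I}$ together with the canonical projection $\pi : Q \to Q/\mathcal{I}$. The verification of the universal property is routine: any Poisson morphism $h : Q \to R$ with $h \circ f = h \circ g$ has $\Ker h$ a Poisson ideal of $Q$ containing all $f(p) - g(p)$, hence containing $\mathcal{I}$, and therefore factors uniquely through $\pi$.

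For coproducts, given a family $\{P_i\}_{i \in I}$ of Poisson algebras, I would form the free Poisson algebra $\Pp(V)$ on the direct sum $V := \bigoplus_{i \in I} P_i$ of the underlying vector spaces, with the resulting $k$-linear maps $\ol{\iota}_i : P_i \hookrightarrow V \to \Pp(V)$. To force each $\ol{\iota}_i$ to respect the Poisson algebra structure on its source, I would let $J \subseteq \Pp(V)$ be the Poisson ideal generated by all elements of the form
\[
\ol{\iota}_i(p\, q) - \ol{\iota}_i(p)\,\ol{\iota}_i(q), \qquad \ol{\iota}_i\bigl([p, q]_{P_i}\bigr) - [\ol{\iota}_i(p),\, \ol{\iota}_i(q)], \qquad \ol{\iota}_i(1_{P_i}) - 1,
\]
for $i \in I$ and $p, q \in P_i$, and set $\coprod_{i \in I} P_i := \Pp(V)/J$ with the induced maps $\iota_i : P_i \to \coprod_{i \in I} P_i$. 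Given Poisson morphisms $f_i : P_i \to Q$, the resulting linear map $\bigoplus_i f_i : V \to Q$ extends uniquely to a Poisson morphism $\tilde{f} : \Pp(V) \to Q$ by the universal property of the free Poisson algebra; because each $f_i$ is already a Poisson morphism, $\tilde{f}$ kills all generators of $J$, hence descends uniquely to a Poisson morphism $\coprod_{i \in I} P_i \to Q$ commuting with the inclusions $\iota_i$.

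The only mildly delicate point, which I expect to be the main obstacle, is to appreciate why the naive candidate $\bigotimes_{i \in I} P_i$ equipped with the Poisson structure \equref{1.2} does \emph{not} provide the coproduct: a direct computation shows that extending Poisson morphisms $f_1 : P_1 \to Q$ and $f_2 : P_2 \to Q$ to a Poisson morphism out of $P_1 \ot P_2$ would require $[f_1(P_1),\, f_2(P_2)]_Q = 0$, a condition not imposed by the universal property. The role of $\Pp(V)$ is precisely to supply free Poisson brackets between elements of distinct factors, and the quotient by $J$ then constrains these brackets just enough to turn each $\iota_i$ into a Poisson morphism, but no further.
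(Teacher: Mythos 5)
Your proof is correct and follows essentially the same route as the paper: coequalizers are obtained as the quotient of $Q$ by the Poisson ideal generated by $\{f(p)-g(p)\}$, and coproducts as a quotient of the free Poisson algebra on the vector space $\bigoplus_{i\in I}P_i$ by the Poisson ideal of relations forcing each canonical map $P_i\to\coprod_{i\in I}P_i$ to be a morphism of Poisson algebras. If anything, your version is slightly more careful than the paper's, which writes the free object as the symmetric algebra $S\bigl(\bigoplus_{l\in I}P_l\bigr)$ and lists only the multiplicativity and unit relations in the generating set $J$, leaving the bracket-compatibility relations $\overline{\iota}_i\bigl([p,q]_{P_i}\bigr)-[\overline{\iota}_i(p),\overline{\iota}_i(q)]$ (which are needed for the $\iota_i$ to be Lie maps when one starts from the genuinely free Poisson algebra) implicit.
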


\begin{proof}
We first indicate the construction of coproducts. Let $(P_{l})_{l
\in I}$ be a family of Poisson algebras and consider
$\bigl(\bigoplus_{l \in I}P_{l}, \, (j_{l})_{l \in I}\bigl)$ to be
their coproduct in ${}_k{\mathcal {M}}$. Then $\bigl(\coprod_{l
\in I} P_{l} := S\bigl(\bigoplus_{l \in I}P_{l}\bigl)/\overline{J}
, \, \{-,\, -\}, \, (q_{l})_{l \in I}\bigl)$ is the coproduct of
the above family in $k$-Poiss, where $(S \bigl(\bigoplus_{l \in
I}P_{l}\bigl), \, i)$ is the symmetric algebra on the vector space
$\bigoplus_{l \in I}P_{l}$, $i: \bigoplus_{l \in I}P_{l}
\rightarrow S\bigl(\bigoplus_{l \in I}P_{l}\bigl)$ stands for the
canonical inclusion, $\overline{J}$ is the Poisson ideal generated
by the set $J:= \{i\circ
j_{l}(x_{l}y_{l})-i\bigl(j_{l}(x_{l})\bigl)i\bigl(j_{l}(y_{l})\bigl)
,$ $1_{S\bigl(\bigoplus P_{l}\bigl)} - i\circ j_{l}(1_{P_{l}}) \,
| \, x_{l}, y_{l} \in P_{l}, l \in I \}$, $\nu :
S\bigl(\bigoplus_{l \in I}P_{l}\bigl) \rightarrow
S\bigl(\bigoplus_{l \in I}P_{l}\bigl)/\overline{J}$ denotes the
canonical projection and $q_{l} = \nu \circ i \circ j_{l}$, for
all $l \in I$. Since the idea behind this construction is
essentially the same as the one used in the case of associative
algebras we will be brief. Consider $Q$ to be a Poisson algebra
and $u_{r}: P_{r} \to Q$ a family of Poisson algebra maps.
Composing the universal maps depicted in \equref{PoissCo} it
yields a unique Poisson algebra map $u: S\bigl(\bigoplus_{l \in
I}P_{l}\bigl)/\overline{J} \to Q$ such that $u \circ q_{r} =
u_{r}$:
\newpage
\begin{equation}\eqlabel{PoissCo}
\xymatrix  @R=4pc {P_{r}\ar[rr]^-{j_{r}}\ar[drrr]_{u_{r}} & {} &
{\bigoplus_{l \in I}P_{l}}\ar[rr]^{i}\ar[dr]_{\overline{u}} & {} &
{S\bigl(\bigoplus
P_{l}\bigl)}\ar[rr]^-{\nu}\ar[dl]_{\underline{u}} & {} &
{S\bigl(\bigoplus_{l \in I}P_{l}\bigl)/\overline{J}}\ar[dlll]^{u}\\
{} & {} & {} & Q & {} & {} & {}}
\end{equation}
Next in line are coequalizers. Let $f$, $g: P \to Q$ be two
Poisson algebra maps and consider $\overline{I}$ to be the Poisson
ideal generated by the set $\{f(p)-g(p) \, | \, p \in P \, \}$.
Then $(Q/\overline{I}, \pi)$ is the coequalizer of the morphisms
$(f,g)$ in $k$-Poiss, where $\pi:Q \rightarrow Q/\overline{I}$ is
the canonical projection; the details are straightforward and left
to the reader.
\end{proof}

In order to have a complete picture on the category $k$-Poiss of
Poisson algebras we record here the result concerning limits:

\begin{proposition}\prlabel{PoissL}
The category $k$-Poiss of Poisson algebras has arbitrary products
and equalizers. In particular, the category $k$-Poiss of Poisson
algebras is complete.
\end{proposition}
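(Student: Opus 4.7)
The plan is to follow the same structural pattern as \prref{0.0.1}: construct products and equalizers explicitly, verify that they satisfy the Poisson algebra axioms and the relevant universal properties, and then invoke \cite[Theorem 6.10]{par} (cited in the preliminaries) to conclude completeness.

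For products, given a family $(P_l)_{l \in I}$ of Poisson algebras, I would endow the Cartesian product $\prod_{l \in I} P_l$ with componentwise structure. Explicitly, define $(x_l)_{l\in I} \cdot (y_l)_{l \in I} := (x_l y_l)_{l \in I}$ and $\bigl[(x_l)_{l\in I},\, (y_l)_{l \in I}\bigl] := \bigl([x_l,\, y_l]_{P_l}\bigl)_{l \in I}$, with unit $(1_{P_l})_{l \in I}$. The Poisson algebra axioms, including the Leibniz identity $[p, qr] = [p,q]r + q[p,r]$, hold componentwise and therefore globally. The canonical projections $\pi_r : \prod_{l \in I} P_l \to P_r$ are clearly Poisson algebra morphisms. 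For the universal property, given any Poisson algebra $Q$ equipped with Poisson algebra maps $v_r : Q \to P_r$, the unique vector space map $v : Q \to \prod_{l \in I} P_l$ defined by $v(q) := \bigl(v_l(q)\bigl)_{l \in I}$ is automatically a morphism of Poisson algebras since each $v_l$ is.

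For equalizers, given two Poisson algebra maps $f$, $g : P \to Q$, I would set
\[
E := \{p \in P \mid f(p) = g(p)\}
\]
with the structure inherited from $P$, together with the inclusion $\iota : E \hookrightarrow P$. Since $f$ and $g$ are algebra maps, $E$ contains $1_P$ and is closed under multiplication; since they are Lie algebra maps, $E$ is closed under $[-,\,-]$. The Leibniz identity is inherited from $P$, so $E$ is a Poisson subalgebra. For the universal property, if $h : R \to P$ is a Poisson algebra map with $f \circ h = g \circ h$, then $h$ factors set-theoretically through $E$, and the resulting corestriction $\overline{h} : R \to E$ is a Poisson algebra map because $\iota$ reflects the Poisson structure.

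There is really no serious obstacle here: unlike the coproduct construction in \prref{0.0.1}, where one has to pass through the symmetric algebra and quotient by a Poisson ideal in order to force the components to embed as Poisson subalgebras, the limit constructions are purely \textbf{componentwise} or \textbf{subobject-based}, and all verifications amount to checking identities termwise. The mildest technical point is ensuring that $E$ really is stable under the Poisson bracket, which follows immediately from $f$ and $g$ being Lie algebra morphisms. Having produced arbitrary products and all equalizers, \cite[Theorem 6.10]{par} yields completeness of $k$-Poiss.
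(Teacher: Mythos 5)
Your proposal is correct and follows exactly the route the paper takes: the paper's proof simply states that products and equalizers are the products and equalizers of the underlying vector spaces with the obvious componentwise (respectively inherited) commutative algebra and Lie algebra structures, which is precisely what you verify in detail. Your additional checks (closure of $E$ under the bracket, the universal properties) are the routine details the paper leaves to the reader.
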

\begin{proof}
It is straightforward to see that both products as well as
equalizers can be constructed as simply the products and
respectively the equalizers of the underlying vector spaces with
the obvious commutative algebra and Lie algebra structures.
\end{proof}

As mentioned before, the colimits construction in the categories
$k$-BiAlgPoiss and $k$-HopfPoiss rely heavily on the corresponding
construction performed in the category $k$-Poiss.

\begin{theorem}\thlabel{1.3}
The categories $k$-BiAlgPoiss and $k$-HopfPoiss of Poisson
bialgebras and respectively Poisson Hopf algebras have arbitrary
coproducts and coequalizers. In particular, the above categories
are cocomplete.
\end{theorem}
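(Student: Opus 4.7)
The plan is to lift the constructions of \prref{0.0.1} from $k$-Poiss to the richer categories $k$-BiAlgPoiss and $k$-HopfPoiss: the same underlying Poisson algebras, once equipped with canonical coalgebra (and antipode) data produced by universal properties, will serve as the colimits in these richer categories. Once coproducts and coequalizers are available in each, cocompleteness follows from the standard criterion recalled before \prref{0.0.1}.

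For the coproduct of a family $(B_l)_{l \in I}$ of Poisson bialgebras, I would set $C := \coprod_{l \in I} B_l$ with insertions $q_l : B_l \to C$ as in \prref{0.0.1}. Since the tensor product of Poisson algebras is again a Poisson algebra by \equref{1.2}, the families of Poisson algebra maps $(q_l \otimes q_l) \circ \Delta_l : B_l \to C \otimes C$ and $\varepsilon_l : B_l \to k$ induce, via the universal property of $C$, unique Poisson algebra maps $\Delta : C \to C \otimes C$ and $\varepsilon : C \to k$. Each bialgebra axiom is then an equality of two Poisson algebra maps out of $C$ that agree on the generating set $\bigcup_l q_l(B_l)$ (because the axiom holds in each $B_l$), hence they coincide by uniqueness. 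For Poisson Hopf algebras, I would additionally build an antipode as follows: each $S_l$ is a Poisson algebra map from $B_l$ to $B_l^{\rm op}$ (the same Poisson algebra with the Lie bracket reversed; this makes sense since $B_l$ is commutative, absorbing the multiplicative anti-morphism property), and, using that the ``${\rm op}$'' involution on $k$-Poiss preserves colimits, the universal property yields a unique Poisson algebra anti-morphism $S : C \to C$ extending the $S_l$. The antipode identity $m \circ (S \otimes \Id) \circ \Delta = \eta \varepsilon = m \circ (\Id \otimes S) \circ \Delta$ is then an equality of associative algebra maps $C \to C$ (using commutativity of $C$ to ensure that $m$ and $S \otimes \Id$ are algebra maps), and holds on the generators by the antipode axiom in each $B_l$.

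For coequalizers of $f, g : B \to B'$, I would take the Poisson algebra coequalizer $(B'/\overline{I}, \pi)$ from \prref{0.0.1}, where $\overline{I}$ is the Poisson ideal generated by $\{f(b) - g(b) : b \in B\}$, and show that $\overline{I}$ is also a coideal (and, in the Hopf case, stable under $S_{B'}$). On the generators this is a direct Sweedler calculation: since $f, g$ are coalgebra (resp.\ Hopf) maps,
\[
\Delta_{B'}(f(b) - g(b)) = (f(b_{(1)}) - g(b_{(1)})) \otimes f(b_{(2)}) + g(b_{(1)}) \otimes (f(b_{(2)}) - g(b_{(2)}))
\]
lies in $\overline{I} \otimes B' + B' \otimes \overline{I}$, while $\varepsilon_{B'}(f(b) - g(b)) = 0$ and $S_{B'}(f(b) - g(b)) = f(S_B(b)) - g(S_B(b)) \in \overline{I}$. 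The universal property of the coequalizer in $k$-BiAlgPoiss (resp.\ $k$-HopfPoiss) then reduces to its universal property in $k$-Poiss, since any bialgebra or Hopf morphism out of $B'$ equalizing $f$ and $g$ automatically vanishes on $\overline{I}$.

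The step I expect to be the main obstacle is propagating the coideal property of $\overline{I}$ from the generating differences to the entire Poisson ideal. Because $\overline{I}$ is generated not only through the associative product but also through the Poisson bracket, one must verify that the subspace $\{x \in B' : \Delta_{B'}(x) \in \overline{I} \otimes B' + B' \otimes \overline{I}\}$ is closed both under multiplication by arbitrary elements of $B'$ and under $[y, -]$ for $y \in B'$. The first closure uses that $\Delta_{B'}$ is multiplicative together with the standard ideal calculation; the second closure crucially exploits that $\Delta_{B'}$ is also a Lie algebra map (the defining axiom \equref{defPH} of a Poisson bialgebra), combined with the Leibniz rule for the bracket on $B' \otimes B'$ given by \equref{1.2}.
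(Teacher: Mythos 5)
Your overall strategy is exactly the paper's: build the coproduct and coequalizer in $k$-Poiss as in \prref{0.0.1}, transport the coalgebra (and antipode) data by the universal property, and check the axioms on generators. Your treatment of coequalizers is correct and matches the paper; in fact your observation that one must propagate the coideal property through both the associative product and the bracket is precisely the content of the paper's explicit Sweedler computation of $\Delta\bigl([q,\, r(f(p)-g(p))]\bigl)$, and your conceptual version (the preimage under the Poisson algebra map $\Delta$ of the Poisson ideal $\overline{I}\ot B' + B'\ot\overline{I}$ is a Poisson ideal, by \equref{1.2}) is a clean abstraction of it. The induced $\Delta$, $\varepsilon$ and the bialgebra axioms are also fine, since there all the maps being compared are Poisson algebra maps out of the coproduct and the coproduct's universal property applies.

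There is, however, a genuine gap in your verification of the antipode identity for the coproduct. You argue that $m\circ(S\ot\Id)\circ\Delta$ and $\eta\varepsilon$ are associative algebra maps $C\to C$ that agree on the generators $\bigcup_l q_l(B_l)$, and conclude they are equal. But $C$ is generated by $\bigcup_l q_l(B_l)$ only as a \emph{Poisson} algebra (recall $C$ is a quotient of the free Poisson algebra $S(L(\bigoplus_l B_l))$, not of a tensor or symmetric algebra on $\bigoplus_l B_l$), so two merely multiplicative maps agreeing there need not coincide: they agree only on the associative subalgebra generated by the images, which is strictly smaller in general. Moreover $S*\Id$ is not a Poisson algebra map (neither $m$ nor $S\ot\Id$ respects the bracket of \equref{1.2}), so you cannot invoke the coproduct's universal property either. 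To close the gap you must additionally show that the set $\{c\in C : (S*\Id)(c)=\eta\varepsilon(c)=(\Id*S)(c)\}$ is closed under the Poisson bracket, not just under multiplication; this is a nontrivial computation involving $\Delta([g,h])=g_{(1)}h_{(1)}\ot[g_{(2)},h_{(2)}]+[g_{(1)},h_{(1)}]\ot g_{(2)}h_{(2)}$ and the anti-Lie property of $S$. The paper avoids stating this step by first realizing $S$ as a Poisson bialgebra map $H\to H^{\rm op,cop}$ via the coproduct's universal property and then deferring the antipode identity to the argument of \cite[Theorem 2.2]{A2}; in your write-up the step is asserted but not justified.
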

\begin{proof}
First we deal with products in the category of Poisson bialgebras.
Consider a family of Poisson bialgebras $\bigl(B_{i}, m_{i},
\eta_{i}, \Delta_{i}, \varepsilon_{i}, [-,\, -]_{i}\bigl)_{i \in
I}$ and let $\bigl(\coprod_{i \in I} B_{i},\, (q_{i})_{i \in
I}\bigl)$ be the coproduct of this family in the category
$k$-Poiss as described in \prref{0.0.1}. It turns out that
$\coprod_{i \in I} B_{i}$ is actually a Poisson bialgebra with
comultiplication and counit given by the unique Poisson algebra
maps such that the following diagrams commute:
\begin{equation}\eqlabel{B}
\xymatrix {& B_{l} \ar[r]^-{q_{l}} \ar[dr]_-{(q_{l} \otimes q_{l})\circ \Delta_{l}} & {\coprod_{i \in I} B_{i}} \ar[d]^{\Delta}\\
& {} & {\coprod_{i \in I} B_{i} \otimes \coprod_{i \in I} B_{i}}}
\xymatrix {& B_{l} \ar[r]^-{q_{l}} \ar[dr]_{\varepsilon_{l}} & {\coprod_{i \in I} B_{i}} \ar[d]^{\varepsilon}\\
& {} & k }
\end{equation}
Proving that $(\coprod_{i \in I} B_{i}, \, \Delta, \,
\varepsilon)$ is a coalgebra goes essentially in the same vain as
in the case of Hopf algebras; we refer the reader to \cite{P} for
more details.

Next we look at coequalizers. Let $f$, $g: P \to Q$ be two Poisson
bialgebra maps and consider $\overline{I}$ to be the Poisson ideal
of $Q$ generated by the set $\{f(p)-g(p) \, | \, p \in P \, \}$.
We prove that $\overline{I}$ is also a coideal, i.e.
$\Delta(\overline{I}) \subseteq Q \ot \overline{I} + \overline{I}
\ot Q$. In fact, as $\Delta$ is a Poisson algebra morphism we only
need to check that $\Delta\bigl([q, \, r(f(p) - g(p))]\bigl)
\subseteq Q \ot \overline{I} + \overline{I} \ot Q$, for all $q$,
$r \in Q$ and $p \in P$. Indeed, using induction this would imply
that $\Delta\bigl([q_{1}, \, [q_{2}, \, ..., [q_{k}, \, r(f(p) -
g(p))]]]\bigl) \subseteq Q \ot \overline{I} + \overline{I} \ot Q$
for all $k \in \NN$, $q_{1}$, $q_{2}$, ..., $q_{k}$, $r \in Q$, $p
\in P$ and the conclusion follows. Now we prove our original
claim:
\begin{eqnarray*}
&&\Delta\bigl([q, \, r(f(p) - g(p))]\bigl) = \bigl[\Delta(q), \,
\Delta \bigl(r(f(p) - g(p))\bigl)\bigl]_{Q \ot Q}\\
&=& \bigl[q_{(1)} \ot q_{(2)}, \, (r_{(1)} \ot r_{(2)})
\bigl(f(p_{(1)}) \ot f(p_{(2)}) - g(p_{(1)}) \ot g(p_{(2)})\bigl)\bigl]_{Q \ot Q}\\
&=& \bigl[q_{(1)} \ot q_{(2)}, \, (r_{(1)} \ot r_{(2)})
\bigl(f(p_{(1)}) \ot (f(p_{(2)}) - g(p_{(2)})) \bigl) -\\
&& (r_{(1)} \ot r_{(2)})\bigl((f(p_{(1)}) - g(p_{(1)})) \ot g(p_{(2)})\bigl)\bigl]_{Q \ot Q}\\
&=& \bigl[q_{(1)} \ot q_{(2)}, \, r_{(1)} f(p_{(1)}) \ot r_{(2)}
(f(p_{(2)}) - g(p_{(2)})) \bigl]_{Q \ot Q} - \\
&& \bigl[q_{(1)} \ot q_{(2)}, \, r_{(1)}(f(p_{(1)}) - g(p_{(1)})) \ot r_{(2)} g(p_{(2)})\bigl]_{Q \ot Q}\\
&\stackrel{\equref{1.2}} {=}& q_{(1)}  r_{(1)} f(p_{(1)}) \ot
\underline{[q_{(2)}, \, r_{(2)}
(f(p_{(2)}) - g(p_{(2)}))]} + \\
&& [q_{(1)}, \,  r_{(1)} f(p_{(1)})] \ot \underline{q_{(2)} r_{(2)}
(f(p_{(2)}) - g(p_{(2)}))} -\\
&& \underline{q_{(1)} r_{(1)}(f(p_{(1)}) - g(p_{(1)}))} \ot
[q_{(2)}, \,r_{(2)} g(p_{(2)})] - \\
&& \underline{[q_{(1)},\, r_{(1)}(f(p_{(1)}) - g(p_{(1)}))]} \ot
q_{(2)} r_{(2)} g(p_{(2)})
\end{eqnarray*}
and the last line is obviously in $Q \ot \overline{I} +
\overline{I} \ot Q$, as the underlined terms are in
$\overline{I}$. Therefore, $Q/\overline{I}$ is a Poisson bialgebra
in a canonical way and moreover $(Q/\overline{I}, \pi)$ is the
coequalizer of the morphisms $(f,g)$ in $k$-BiAlgPoiss, where
$\pi:Q \rightarrow Q/\overline{I}$ is the canonical projection.

Consider now a family of Poisson Hopf algebras $\bigl(H_{i},
m_{i}, \eta_{i}, \Delta_{i}, \varepsilon_{i}, S_{i},
[-,\,-]_{i}\bigl)_{i \in I}$ and let $\bigl(H:=\coprod_{i \in I}
H_{i},\, m, \, \eta, \, \Delta, \, \varepsilon,\, [-,\, -], \,
(q_{i})_{i \in I}\bigl)$ be the previously constructed coproduct
of the underlying Poisson bialgebras. Remark that $S_{i}: H_{i}
\to {H_{i}}^{\rm op, cop}$ is a Poisson bialgebra map. Then, the
universal property of the coproduct yields an unique Poisson
bialgebra map $S:H \rightarrow H^{\rm op, cop}$ such that the
following diagram commutes for all $i \in I$:
\begin{equation}\eqlabel{antipod}
\xymatrix {& H_{i} \ar[r]^{q_{i}} \ar[dr]_{q_{i} \circ S_{i}} & {H} \ar[d]^{S}\\
& {} & {H^{\rm op, cop}}}
\end{equation}
As $S:H \rightarrow H^{\rm op, cop}$ defined in \equref{antipod}
is a Poisson bialgebra map we only need to prove that $S$ is
indeed an antipode for $H$. This follows exactly as in the proof
of \cite[Theorem 2.2]{A2}. Finally, since $k$-HopfPoiss is a full
subcategory of the category $k$-BiAlgPoiss it follows that
$\bigl(H:=\coprod_{i \in I} H_{i},\, \Delta, \, \varepsilon,\,
S,\, [-,\, -], \, (q_{i})_{i \in I}\bigl)$ is also
the coproduct in $k$-HopfPoiss. \\
Consider now $f$, $g: P \to Q$ to be two Poisson Hopf algebra maps
and, as before, let $\overline{J}$ be the Poisson ideal of $Q$
generated by the set $\{f(p)-g(p) \, | \, p \in P \, \}$. The
computations performed in the case of Poisson bialgebras imply
that $\overline{j}$ is also a coideal. We are left to prove that
$\overline{j}$ is a Hopf ideal, i.e. $S_{Q}(\overline{J})
\subseteq \overline{J}$. Arguing as in the first part of the
proof, we only need to show that $S_{Q} \bigl([q,\, r(f(p) -
g(p))\bigl) \subseteq \overline{J}$ for all $q$, $r \in Q$ and $p
\in P$. Indeed, we have:
\begin{eqnarray*}
S_{Q} \bigl([q,\, r(f(p) - g(p))\bigl) &=& \bigl[S_{Q}\bigl(r(f(p)
- g(p))\bigl), \, S_{Q}(q)\bigl]\\
&=& \bigl[S_{Q}\bigl(f(p)
- g(p)\bigl)\, S_{Q}(r), \, S_{Q}(q)\bigl]\\
&=& \bigl[\bigl(f(S_{P}(p)) - g(S_{P}(p))\bigl) S_{Q}(r), \,
S_{Q}(q)\bigl]\\
&=& - \bigl[S_{Q}(q),\, S_{Q}(r)\bigl(f(S_{P}(p)) -
g(S_{P}(p))\bigl) \bigl] \in \overline{J}
\end{eqnarray*}
and the proof is now finished.
\end{proof}

\section{The free Poisson Hopf algebra on a coalgebra}\selabel{3}

In this section we introduce the main characters of this paper,
namely the free Poisson Hopf algebras generated by coalgebras. The
strategy we pursue is the following: first, we introduce the free
Poisson bialgebra on a coalgebra (\thref{free1}) and then we prove
that there also exist a free Poisson Hopf algebra on every Poisson
bialgebra (\thref{free2}). Finally, by putting the two
constructions together we arrive at the free Poisson Hopf algebra
generated by a coalgebra (\thref{free3}).

\begin{theorem}\thlabel{free1}
The forgetful functor $F_{1}: k$-$BiAlgPoiss \to k$-$CoAlg$ has a
left adjoint, i.e. there exists a free Poisson bialgebra on every
coalgebra.
\end{theorem}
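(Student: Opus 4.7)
The plan is to mirror Takeuchi's construction of the free bialgebra on a coalgebra \cite{T}, with the tensor algebra replaced throughout by the free Poisson algebra. Given a coalgebra $C$, I would propose $B_1(C) := \mathcal{P}(C) = S\bigl(L(C)\bigr)$ equipped with a comultiplication and counit extending those of $C$, the canonical inclusion $\overline{i}: C \to \mathcal{P}(C)$ playing the role of the unit of the adjunction. The job then reduces, via the framework recalled in \seref{1}, to showing that $\bigl(\mathcal{P}(C),\overline{i}\bigr)$ is a co-universal solution for the co-universal problem generated by $C$ and $F_1$.

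To build the bialgebra structure I would invoke the universal property of the free Poisson algebra twice. Since $\mathcal{P}(C) \otimes \mathcal{P}(C)$ is a Poisson algebra by \equref{1.2} and $k$ is trivially a Poisson algebra, there exist unique Poisson algebra morphisms
\[
\Delta: \mathcal{P}(C) \to \mathcal{P}(C) \otimes \mathcal{P}(C), \qquad \varepsilon: \mathcal{P}(C) \to k
\]
with $\Delta\bigl(\overline{i}(c)\bigr) = \overline{i}(c_{(1)}) \otimes \overline{i}(c_{(2)})$ and $\varepsilon\bigl(\overline{i}(c)\bigr) = \varepsilon_C(c)$. Coassociativity and the counit axiom would then be a uniqueness argument: both $(\Delta \otimes \Id) \circ \Delta$ and $(\Id \otimes \Delta) \circ \Delta$ are Poisson algebra maps $\mathcal{P}(C) \to \mathcal{P}(C)^{\otimes 3}$ that agree on $\overline{i}(C)$ by coassociativity of $\Delta_C$, hence coincide; the analogous argument settles the counit. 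Together with the commutativity of $\mathcal{P}(C)$ and the fact that $\Delta$, $\varepsilon$ are by construction Poisson algebra maps, this equips $\mathcal{P}(C)$ with the structure of a Poisson bialgebra.

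For the universal property, let $B$ be a Poisson bialgebra and $f: C \to F_1(B)$ a coalgebra map. Viewing $B$ merely as a Poisson algebra, the free Poisson algebra adjunction yields a unique Poisson algebra map $g: \mathcal{P}(C) \to B$ with $g \circ \overline{i} = f$. I would then check that $g$ is automatically a coalgebra map: the identities $\Delta_B \circ g = (g \otimes g) \circ \Delta$ and $\varepsilon_B \circ g = \varepsilon$ are between pairs of Poisson algebra maps out of $\mathcal{P}(C)$, so each reduces, by the same uniqueness principle, to its restriction to $\overline{i}(C)$, where both sides agree by the hypothesis that $f$ is a coalgebra map. Uniqueness of $g$ as a Poisson bialgebra map is then immediate, since any competitor is in particular a Poisson algebra map satisfying $g' \circ \overline{i} = f$.

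I do not expect a serious obstacle. Every non-trivial equality is established by the single slogan \emph{``two Poisson algebra maps out of $\mathcal{P}(C)$ coincide whenever they agree on $\overline{i}(C)$''}, and the only technical care needed is to ensure that the Poisson algebra structure on the tensor square (given by \equref{1.2}) is correctly used when factoring $\Delta$ through the universal property. This is precisely where the construction depends essentially on the Poisson adjunction rather than on Takeuchi's tensor algebra, and it foreshadows why the analogous problem of producing a free Poisson Hopf algebra on a Poisson bialgebra cannot be resolved merely by putting a Lie bracket on Takeuchi's free Hopf algebra.
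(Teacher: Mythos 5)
Your proposal is correct and follows essentially the same route as the paper: define $\overline{\Delta}$ and $\overline{\varepsilon}$ on $\mathcal{P}(C)$ via the universal property of the free Poisson algebra, verify coassociativity and the counit axiom by the uniqueness of Poisson algebra maps agreeing on $\overline{i}(C)$, and establish the co-universal property by extending a coalgebra map $f$ to a Poisson algebra map and checking it is automatically a coalgebra map by the same uniqueness principle. No gaps.
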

\begin{proof}
Let $(C,\, \Delta_{C}, \, \varepsilon_{C})$ be a coalgebra and
consider $(\mathcal{P}(C),\,\overline{i})$ to be the free Poisson
algebra on the vector space $C$. By the universal property of the
free Poisson algebra we obtain two Poisson algebra maps
$\overline{\Delta}: \mathcal{P}(C) \to \mathcal{P}(C) \ot
\mathcal{P}(C) $ and respectively $\overline{\varepsilon}:
\mathcal{P}(C) \to k$ such that the following two diagrams
commute:
\begin{equation}\eqlabel{bialg}
\xymatrix @R=4pc  {& C \ar[r]^-{\overline{i}} \ar[dr]_-{(\overline{i} \otimes \overline{i})\circ \Delta_{C}} & {\mathcal{P}(C)} \ar[d]^-{\overline{\Delta}}\\
& {} & {\mathcal{P}(C) \otimes \mathcal{P}(C)}}
\xymatrix {& C \ar[r]^-{\overline{i}} \ar[dr]_-{\varepsilon_{C}} & {\mathcal{P}(C)} \ar[d]^-{\overline{\varepsilon}}\\
& {} & k }
\end{equation}
We start by proving that $(\mathcal{P}(C),\,
\overline{\Delta},\,\overline{\varepsilon})$ is a coalgebra.
Consider the Poisson algebra map $(\overline{\Delta} \ot Id) \circ
\overline{\Delta} \circ \overline{i}$. The universal property of
the free Poisson algebra yields a unique Poisson algebra map
$\psi: \mathcal{P}(C) \to \mathcal{P}(C) \otimes \mathcal{P}(C)
\otimes \mathcal{P}(C)$ such that the following diagram commutes:
\begin{equation}
\xymatrix @R=4pc  {& C \ar[r]^-{\overline{i}} \ar[dr]_-{(\overline{\Delta} \ot Id) \circ \overline{\Delta} \circ\overline{i}}  & {\mathcal{P}(C)} \ar[d]^-{\psi}\\
& {} & {\mathcal{P}(C) \otimes \mathcal{P}(C) \otimes
\mathcal{P}(C)}}
\end{equation}
It is easy to see that the Poisson algebra map $(\overline{\Delta}
\ot Id) \circ \overline{\Delta}$ makes the above diagram commute.
Thus, using the uniqueness of $\psi$, in order to prove that
$(\overline{\Delta} \ot Id) \circ \overline{\Delta} = (Id \ot
\overline{\Delta}) \circ \overline{\Delta}$ it is enough to show
that $(\overline{\Delta} \ot Id) \circ \overline{\Delta} \circ
\overline{i} = (Id \ot \overline{\Delta}) \circ \overline{\Delta}
\circ \overline{i}$. Indeed, concerning the last claim we have:
\begin{eqnarray*}
(\overline{\Delta} \ot Id) \circ \underline{\overline{\Delta}
\circ \overline{i}} &\stackrel{\equref{bialg}} {=}&
(\overline{\Delta} \ot Id) \circ (\overline{i} \ot \overline{i})
\circ \Delta_{C}\\
&{=}& \bigl((\underline{\overline{\Delta} \circ \overline{i}}) \ot
\overline{i}\bigl) \circ \Delta_{C}\\
&\stackrel{\equref{bialg}} {=}& \Bigl(\bigl((\overline{i} \ot
\overline{i}) \circ \Delta_{C}\bigl) \ot  \overline{i}\Bigl) \circ
\Delta_{C}\\
&{=}& (\overline{i} \ot \overline{i} \ot \overline{i}) \circ
\underline{(Id \ot \Delta_{C}) \circ \Delta_{C}}\\
&{=}& (\overline{i} \ot \overline{i} \ot \overline{i}) \circ
(\Delta_{C} \ot Id) \circ \Delta_{C}\\
&{=}& \Bigl(\bigl(\overline{i} \ot \underline{(\overline{i} \ot \overline{i})
\circ \Delta_{C}} \bigl) \Bigl) \circ \Delta_{C}\\
&\stackrel{\equref{bialg}} {=}& \bigl(\overline{i} \ot
(\overline{\Delta} \circ \overline{i})\bigl) \circ \Delta_{C}\\
&{=}& (Id \ot \overline{\Delta}) \circ \underline{(\overline{i}
\ot \overline{i}) \circ \Delta_{C}}\\
&\stackrel{\equref{bialg}} {=}& (Id \ot \overline{\Delta}) \circ
\overline{\Delta} \circ \overline{i}
\end{eqnarray*}
A similar argument proves that $(Id \ot \overline{\varepsilon})
\circ \overline{\Delta} = (\overline{\varepsilon} \ot Id) \circ
\overline{\Delta} = Id$ and thus $\mathcal{P}(C)$ is in fact a
Poisson bialgebra. The proof will be finished once we show that
the pair $(\mathcal{P}(C), \overline{i})$ provides a co-universal
solution to the co-universal problem generated by the coalgebra
$C$ and the forgetful functor $F_{1}: k$-BiAlgPoiss $\to k$-CoAlg.
To this end, let $H$ be a Poisson bialgebra and $f: C \to H$ a
coalgebra map. By the universal property of the free Poisson
algebra, there exists a Poisson algebra map $\overline{f} :
\mathcal{P}(C) \to H$ such that the following diagram is
commutative:
$$
\xymatrix {& C \ar[r]^-{\overline{i}} \ar[dr]_-{f} & {\mathcal{P}(C)} \ar[d]^-{\overline{f}}\\
& {} & H }
$$
We are left to prove that $\overline{f}$ is also a coalgebra map.
Since $\Delta_{H} \circ \overline{f}$ is a Poisson algebra map, by
the universal property of the free Poisson algebra, there exists a
unique Poisson algebra map $\xi: \mathcal{P}(C) \to H \ot H$ such
that the following diagram commutates:
$$
\xymatrix @R=4pc  {& C \ar[r]^-{\overline{i}} \ar[dr]_-{\Delta_{H} \circ \overline{f} \circ \overline{i}} & {\mathcal{P}(C)} \ar[d]^-{\xi}\\
& {} & {H \otimes H}}
$$
Obviously, $\Delta_{H} \circ \overline{f}$ makes the above diagram
commutative. By the same argument as before, in order to prove
that $(\overline{f} \ot \overline{f}) \circ \overline{\Delta} =
\Delta_{H} \circ \overline{f}$, it will be enough to show that
$(\overline{f} \ot \overline{f}) \circ \overline{\Delta} \circ
\overline{i} = \Delta_{H} \circ \overline{f} \circ \overline{i}$.
As for the last claim, we have:
\begin{eqnarray*}
(\overline{f} \ot \overline{f}) \circ \underline{\overline{\Delta}
\circ \overline{i}} &\stackrel{\equref{bialg}} {=}& (\overline{f}
\ot \overline{f}) \circ (\overline{i} \ot \overline{i}) \circ
\Delta_{C}\\
&{=}& \bigl((\underline{\overline{f} \circ \overline{i}}) \ot
(\underline{\overline{f}
\circ \overline{i}})\bigl) \circ \Delta_{C}\\
&{=}& (f \ot f) \circ \Delta_{C}\\
&\stackrel{f {\rm~coalgebra~map~}}{=}& \Delta_{H} \circ f =
\Delta_{H} \circ \overline{f} \circ \overline{i}
\end{eqnarray*}
Consider now the Poisson algebra map $\varepsilon_{H} \circ
\overline{f} \circ \overline{i}$. Again by the universal property
of the free Poisson algebra, there exists a unique Poisson algebra
map $\chi: \mathcal{P}(C) \to k$ such that the following diagram
is commutative:
$$
\xymatrix  {& C \ar[r]^-{\overline{i}} \ar[dr]_-{\varepsilon_{H} \circ \overline{f} \circ \overline{i}} & {\mathcal{P}(C)} \ar[d]^-{\chi}\\
& {} & {k}}
$$
Using the same argument as before, in order to prove that
$\varepsilon_{H} \circ \overline{f} = \overline{\varepsilon}$ we
only need to show that $\varepsilon_{H} \circ \overline{f} \circ
\overline{i} = \overline{\varepsilon} \circ \overline{i}$. The
latter statement holds true by the following straightforward
computation:
\begin{eqnarray*}
\varepsilon_{H} \circ \overline{f} \circ \overline{i} =
\varepsilon_{H} \circ f = \varepsilon_{C} = \overline{\varepsilon}
\circ \overline{i}
\end{eqnarray*}
This finishes the proof.
\end{proof}

In the proof of our next theorem we make use of some statements
already proven in \cite[Theorem 2.6.3]{P}. Since the proof there
is very detailed and freely available, we will refer to it and
leave out those computations.

\begin{theorem}\thlabel{free2}
The forgetful functor $F_{2}: k$-$PoissHopf \to k$-$BiAlgPoiss$
has a left adjoint, i.e. there exists a free Poisson Hopf algebra
on every Poisson bialgebra.
\end{theorem}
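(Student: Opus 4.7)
The plan is to adapt Takeuchi's construction of the free Hopf algebra on a bialgebra, as detailed in \cite[Theorem 2.6.3]{P}, to the Poisson setting, using the coproducts built in \thref{1.3}.

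First, given a Poisson bialgebra $B$, introduce a family $(B_n)_{n \geq 0}$ of Poisson bialgebras with $B_0 = B$ and $B_{n+1} := B_n^{\rm op, cop}$, and let $T := \coprod_{n \geq 0} B_n$ be the coproduct in $k$-BiAlgPoiss, with canonical inclusions $\iota_n: B_n \to T$. Since $B_{n+1}^{\rm op, cop} = B_n$, each map $\iota_{n+1}$ can equally well be regarded as a Poisson bialgebra morphism $B_n \to T^{\rm op, cop}$, and the universal property of the coproduct yields a unique Poisson bialgebra map $\sigma: T \to T^{\rm op, cop}$ satisfying $\sigma \circ \iota_n = \iota_{n+1}$ for all $n$. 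Intuitively, $\sigma$ is the ``antipode-in-waiting'' on $T$; it remains to cut $T$ down so that $\sigma$ genuinely satisfies the convolution inverse identities.

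To that end, let $I_0$ be the Poisson ideal of $T$ generated by $\{x_{(1)} \sigma(x_{(2)}) - \varepsilon_T(x) 1_T,\; \sigma(x_{(1)}) x_{(2)} - \varepsilon_T(x) 1_T \mid x \in T\}$. A computation entirely parallel to the coequalizer computation in the proof of \thref{1.3}---relying on the compatibility \equref{1.2} to push $\Delta$ across bracketed expressions---shows that $I_0$ is automatically a coideal of $T$. Since $\sigma(I_0) \not\subseteq I_0$ in general, follow Takeuchi's iterative trick: define $I_{k+1}$ to be the Poisson ideal generated by $I_k \cup \sigma(I_k)$ (again a coideal by the same argument) and set $I := \bigcup_{k \geq 0} I_k$. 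By construction $I$ is a $\sigma$-stable Poisson ideal and coideal, so $H(B) := T/I$ is a Poisson bialgebra on which $\sigma$ descends to a Poisson bialgebra anti-morphism that, by the defining relations, serves as an antipode; thus $H(B) \in k$-PoissHopf.

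Finally, verify the co-universal property. Set $u := \pi \circ \iota_0 : B \to H(B)$, where $\pi: T \to H(B)$ is the canonical projection. Given a Poisson Hopf algebra $K$ and a Poisson bialgebra map $f: B \to K$, recursively define Poisson bialgebra maps $f_n: B_n \to K$ by $f_0 = f$ and $f_{n+1} = S_K \circ f_n$, using that $S_K$ is a Poisson bialgebra map $K \to K^{\rm op, cop}$. The universal property of $T$ produces a unique Poisson bialgebra map $\tilde f: T \to K$ extending all $f_n$; a check on the generators $\iota_n(B_n)$ shows $\tilde f \circ \sigma = S_K \circ \tilde f$, which combined with the antipode axiom in $K$ forces $\tilde f(I) = 0$. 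Hence $\tilde f$ descends to the desired Poisson Hopf algebra map $\bar f: H(B) \to K$ with $\bar f \circ u = f$, and its uniqueness follows from that of $\tilde f$. The main obstacle I expect is the verification in the third paragraph that $I_0$ is already a coideal, because of the nontrivial interaction between the Poisson bracket and $\Delta$; once that computation is completed along the lines of \thref{1.3}, the remaining steps are formal and mirror \cite[Theorem 2.6.3]{P}.
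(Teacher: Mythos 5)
Your proposal is correct and follows essentially the same route as the paper: the alternating coproduct $\coprod_{n} B_n$ with $B_{n+1}=B_n^{\rm op, cop}$, the induced map $\sigma$, and the quotient by the Poisson ideal of antipode relations, shown to be a coideal by the same bracket computation as in \thref{1.3}. The only deviation is your iterative $\sigma$-closure, which is unnecessary (though harmless): with generators ranging over all $x\in T$ (or, as in the paper, over the $q_n(x)$, using $\sigma\circ q_n=q_{n+1}$) the ideal $I_0$ is already $\sigma$-stable, since commutativity and the fact that $\sigma$ is a coalgebra anti-morphism show that $\sigma$ carries the relation attached to $x$ to the relation attached to $\sigma(x)$.
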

\begin{proof}
Let $B$ be a Poisson bialgebra. We aim to construct a co-universal
solution to the co-universal problem generated by $B$ and the
forgetful functor $F_{2}: k$-PoissHopf $\to k$-BiAlgPoiss. To this
end, consider $\bigl(C = \coprod_{n \in \NN} B_{i},\, (q_{n})_{n
\in \NN}\bigl)$ to be the coproduct in $k$-BiAlgPoiss of the
Poisson bialgebras $B_{n}$, $n \geq 0$, where $B_{n} = B$ for $n$
even and $B_{n} = B^{\rm op, cop}$ for $n$ odd. The universality
of the coproduct yields a unique Poisson bialgebra map $S': C \to
C^{\rm op, cop}$ which makes the following diagram commutative:
$$
\xymatrix {& B_{i} \ar[rr]^{q_{i}} \ar[dr]_{Id} & {} & {C} \ar[dd]^{S'}\\
& {} & B_{i+1}^{\rm op, cop}\ar[dr]_{q_{i+1}} & {} \\
& {} & {} & {C^{\rm op, cop}}}
$$
Now consider $\mathcal{I}$ to be the Poisson ideal of $C$
generated by the set $I = \{(S' * Id - u_{C} \circ
\varepsilon_{C})(q_{n}(x)), \, (Id * S'- u_{C} \circ
\varepsilon_{C})(q_{n}(x)) ~|~ x \in B_{n},\, n \in \NN \}$. We
will prove that $\mathcal{I}$ is also a Hopf ideal, i.e.
$\varepsilon_{C}(\mathcal{I}) = 0$, $\Delta_{C}(\mathcal{I})
\subseteq C \ot \mathcal{I} + \mathcal{I} \ot C$ and
$S'(\mathcal{I}) \subseteq \mathcal{I}$. As noted in the proof of
\thref{1.3}, it is enough to check this on the elements of the
form $\bigl[a, \, b\bigl(S'(q_{n}(x)_{(1)}) q_{n}(x)_{(2)} -
u_{C}\circ \varepsilon_{C} (q_{n}(x)) \bigl)\bigl]$ where $a$, $b
\in C$ and $x \in B_{n}$, $n \in \NN$. We know from \cite[Theorem
2.6.3]{P} that $\Delta_{C}(S'(q_{n}(x)_{(1)}) q_{n}(x)_{(2)} -
u_{C}\circ \varepsilon_{C} (q_{n}(x))) \in C \ot CI + CI \ot C$,
where $CI$ denotes the two-sided ideal generated by $I$. Since the
inclusion $CI \subset \mathcal{I}$ holds true, we will denote
$\Delta_{C}(S'(q_{n}(x)_{(1)}) q_{n}(x)_{(2)} - u_{C}\circ
\varepsilon_{C} (q_{n}(x)))  = c \ot \iota + \overline{\iota} \ot
\overline{c}$ (note that in order to be consistent with our
notations, we suppressed the summation sign in the right hand
side) with $c$, $\overline{c} \in C$ and $\iota$,
$\overline{\iota} \in \mathcal{I}$. Then, we have:
\begin{eqnarray*}
&&\Delta_{C} \Bigl(\bigl[a, \, b\bigl(S'(q_{n}(x)_{(1)})
q_{n}(x)_{(2)} - u\circ \varepsilon (q_{n}(x))
\bigl)\bigl]_{C}\Bigl) =\\
&=& \bigl[\Delta(a),\, \Delta(b) \Delta \bigl(S'(q_{n}(x)_{(1)})
q_{n}(x)_{(2)} - u\circ
\varepsilon (q_{n}(x)) \bigl) \bigl]_{C \ot C}\\
&=& \bigl[a_{(1)} \ot a_{(2)}, \, (b_{(1)} \ot b_{(2)}) (c \ot
\iota + \overline{\iota} \ot \overline{c})\bigl]_{C \ot C}\\
&\stackrel{\equref{1.2}} {=}& \bigl[a_{(1)} \ot a_{(2)}, \, b_{(1)} c \ot b_{(2)} \iota \bigl]_{C \ot C} + \bigl[a_{(1)} \ot a_{(2)}, \, b_{(1)} \overline{\iota} \ot b_{(2)} \overline{c} \bigl]_{C \ot C}\\
&\stackrel{\equref{1.2}} {=}& a_{(1)} b_{(1)} c \ot
\underline{[a_{(2)}, \, b_{(2)} \iota]_{C}} + [a_{(1)}, \, b_{(1)}
c]_{C} \ot \underline{a_{(2)}  b_{(2)} \iota} + \underline{a_{(1)}
b_{(1)} \overline{\iota}} \ot [a_{(2)}, \, b_{(2)}
\overline{c}]_{C} \\
&& + \underline{[a_{(1)}, \, b_{(1)} \overline{\iota}]_{C}} \ot
a_{(2)} b_{(2)} \overline{c}
\end{eqnarray*}
and the conclusion follows since the underlined terms belong to
$\mathcal{I}$. We are left to show that $S'(\mathcal{I}) \subseteq
\mathcal{I}$. Recall from the proof of \cite[Theorem 2.6.3]{P}
that $S'\bigl(S'(q_{n}(x)_{(1)}) q_{n}(x)_{(2)} - u_{C}\circ
\varepsilon_{C} (q_{n}(x)) \bigl) = q_{n+1}(x)_{(1)} S'(
q_{n+1}(x)_{(2)}) - u_{C}\circ \varepsilon_{C} (q_{n+1}(x)) \in I$
for all $x \in B_{n}$, $n \in \NN$. Therefore, we have:
\begin{eqnarray*}
&& S' \Bigl(\bigl[a, \, b\bigl(S'(q_{n}(x)_{(1)}) q_{n}(x)_{(2)} -
u\circ \varepsilon (q_{n}(x)) \bigl)\bigl]_{C}\Bigl) =\\
&=& \Bigl[S'\bigl(S'(q_{n}(x)_{(1)}) q_{n}(x)_{(2)} - u\circ
\varepsilon (q_{n}(x)) \bigl) S'(b), \, S'(a)\Bigl]_{C}\\
&=& \Bigl[ \underline{\bigl(q_{n+1}(x)_{(1)} S'( q_{n+1}(x)_{(2)}) - u\circ
\varepsilon (q_{n+1}(x))\bigl)} S'q_{n+1}(x)_{(1)} S'(b), \, S'(a)\Bigl]_{C}\\
\end{eqnarray*}
and the last line is clearly in $\mathcal{I}$ since the underlined
term belongs to $I$.

We are now ready to construct a co-universal solution to the
co-universal problem generated by the Poisson bialgebra $B$ and
the functor $F_{2}: k$-PoissHopf $\to k$-BiAlgPoiss. To this end,
consider the Poisson algebra $H(C): = C/ \mathcal{I}$ and $\pi: C
\to C/\mathcal{I}$ the canonical projection. $H(C)$ can be made
into a Poisson Hopf algebra with the coalgebra structure
$(\overline{\Delta},\, \overline{\varepsilon})$ and antipode
$\overline{S}$ given by the unique Poisson algebra maps which make
the following diagrams commute:
$$
\xymatrix  {& C \ar[r]^-{\pi} \ar[dr]_-{\varepsilon } & {H(C)} \ar[d]^-{\overline{\varepsilon}}\\
& {} & {k}}
\xymatrix @R=4pc {& C \ar[r]^-{\pi} \ar[dr]_{(\pi \ot \pi) \circ \Delta} & {H(C)} \ar[d]^{\overline{\Delta}}\\
& {} & {H(C) \ot H(C)}}
\xymatrix @R=4pc {& C \ar[r]^-{\pi} \ar[dr]_{\pi \circ S'} & {H(C)} \ar[d]^{\overline{S}}\\
& {} & {H(C)^{\rm cop}}}
$$
By arguments similar to those used in the proof of \cite[Theorem
2.6.3]{P} it can be easily seen that the object constructed above
indeed provides a co-universal solution to the co-universal
problem generated by the Poisson bialgebra $B$ and the functor
$F_{2}: k$-PoissHopf $\to k$-BiAlgPoiss.
\end{proof}

We can now put the last two results together:

\begin{theorem}\thlabel{free3}
The forgetful functor $F: k$-PoissHopf $\to k$-CoAlg has a left
adjoint, i.e. there exists a free Poisson Hopf algebra on every
coalgebra.
\end{theorem}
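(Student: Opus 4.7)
The plan is to obtain this statement as a direct consequence of \thref{free1} and \thref{free2} by composing the two left adjoints. The key observation is that the forgetful functor $F : k$-PoissHopf $\to k$-CoAlg factors as $F = F_{1} \circ F_{2}$, where $F_{1}: k$-BiAlgPoiss $\to k$-CoAlg and $F_{2}: k$-PoissHopf $\to k$-BiAlgPoiss are the forgetful functors considered in the previous two theorems. Since a composition of right adjoints is a right adjoint (and the composition of the corresponding left adjoints supplies the left adjoint of the composite), the left adjoint to $F$ will be obtained by first applying the free Poisson bialgebra construction and then the free Poisson Hopf algebra construction.

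More concretely, given a coalgebra $(C,\,\Delta_{C},\,\varepsilon_{C})$, I would first invoke \thref{free1} to produce the free Poisson bialgebra $(\mathcal{P}(C),\,\overline{i})$, with its universal map $\overline{i}: C \to F_{1}(\mathcal{P}(C))$. Then I would apply \thref{free2} to $\mathcal{P}(C)$ to obtain the free Poisson Hopf algebra $(H(\mathcal{P}(C)),\,\pi)$ on $\mathcal{P}(C)$, where $\pi: \mathcal{P}(C) \to F_{2}(H(\mathcal{P}(C)))$ is the corresponding co-universal morphism of Poisson bialgebras. The candidate co-universal solution to the co-universal problem generated by $C$ and $F$ would be the pair $(H(\mathcal{P}(C)),\, F_{1}(\pi) \circ \overline{i})$, where $F_{1}(\pi) \circ \overline{i}: C \to F(H(\mathcal{P}(C)))$ is viewed as a coalgebra map.

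To verify the universal property, I would let $K$ be an arbitrary Poisson Hopf algebra and $f: C \to F(K)$ a coalgebra map. Applying the universal property of $\mathcal{P}(C)$ yields a unique Poisson bialgebra map $\overline{f}: \mathcal{P}(C) \to F_{2}(K)$ with $F_{1}(\overline{f}) \circ \overline{i} = f$. Applying next the universal property of $H(\mathcal{P}(C))$ to $\overline{f}$ yields a unique Poisson Hopf algebra map $\widetilde{f}: H(\mathcal{P}(C)) \to K$ with $F_{2}(\widetilde{f}) \circ \pi = \overline{f}$. Composing the two commuting triangles gives $F(\widetilde{f}) \circ (F_{1}(\pi) \circ \overline{i}) = f$, and the uniqueness of $\widetilde{f}$ follows by chasing the uniqueness assertions of \thref{free1} and \thref{free2} back through the same diagram.

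Since both steps are purely formal consequences of the two preceding theorems, I do not anticipate any serious obstacle. The only mild subtlety is keeping track of the forgetful functors when composing the universal arrows, but this is routine diagram-chasing. The resulting argument is therefore essentially a short corollary, which is presumably why the authors bundle it under the same numbering as \thref{free1} and \thref{free2}.
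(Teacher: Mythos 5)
Your proposal is correct and follows exactly the paper's own argument: the paper proves \thref{free3} simply by composing the left adjoints from \thref{free1} and \thref{free2}. Your additional unwinding of the composite universal property is a more explicit version of the same routine verification.
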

\begin{proof}
It follows by composing the two left adjoint functors constructed
in \thref{free1} and \thref{free2}.
\end{proof}

\begin{corollary}
The categories $k$-PoissHopf and $k$-BiAlgPoiss of Poisson Hopf
algebras and respectively Poisson bialgebras have generators.
\end{corollary}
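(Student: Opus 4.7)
The plan is to transfer a generating family from $k$-CoAlg to both $k$-BiAlgPoiss and $k$-PoissHopf along the left adjoints constructed in \thref{free1} and \thref{free3}. First I would observe that $k$-CoAlg itself admits a set of generators: by the fundamental theorem of coalgebras, every element of a coalgebra lies in some finite-dimensional subcoalgebra, so a set $\mathcal{G}$ of representatives of the isomorphism classes of finite-dimensional coalgebras generates $k$-CoAlg. Indeed, given distinct coalgebra morphisms $f, g : C \to D$, choose $c \in C$ with $f(c) \neq g(c)$, embed $c$ in a finite-dimensional subcoalgebra $C_{0} \subseteq C$, and note that the inclusion $C_{0} \hookrightarrow C$ (which factors through some $G \in \mathcal{G}$ up to isomorphism) separates $f$ from $g$.

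Second, I would invoke the standard fact that a faithful right adjoint transports generating families along its left adjoint: if $F: \mathcal{D} \to \mathcal{C}$ is faithful with left adjoint $L$ and $\mathcal{G}$ generates $\mathcal{C}$, then $\{L(G) \mid G \in \mathcal{G}\}$ generates $\mathcal{D}$. The verification is a one-line adjunction calculation: for $f \neq g : X \to Y$ in $\mathcal{D}$, faithfulness produces $h : G \to F(X)$ with $F(f) \circ h \neq F(g) \circ h$, and then the transposed morphism $\widetilde{h} : L(G) \to X$ satisfies $f \circ \widetilde{h} \neq g \circ \widetilde{h}$.

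The forgetful functors $F_{1}: k$-BiAlgPoiss $\to k$-CoAlg and $F_{1} \circ F_{2} : k$-PoissHopf $\to k$-CoAlg are faithful, since morphisms in either source category are in particular $k$-linear maps and hence determined by their values on elements. Applying the transfer principle with the left adjoints supplied by \thref{free1} and \thref{free3} thus yields the desired generating families $\{L(G) \mid G \in \mathcal{G}\}$ in $k$-BiAlgPoiss and $k$-PoissHopf. Moreover, since both categories are cocomplete by \thref{1.3}, one may form the coproduct $\coprod_{G \in \mathcal{G}} L(G)$ to obtain a single generator if preferred.

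I do not expect any step to present a genuine obstacle: the argument is purely formal, combining the classical finite-dimensionality theorem for coalgebras, the immediate faithfulness of the forgetful functors, and the adjunctions already established earlier in the paper. The only mild subtlety worth emphasizing is that finite-dimensional coalgebras form an essentially small collection, so that $\mathcal{G}$ is genuinely a set rather than a proper class.
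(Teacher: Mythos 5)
Your proposal is correct and follows essentially the same route as the paper: take the generator of $k$-CoAlg built from representatives of isomorphism classes of finite-dimensional coalgebras (the paper cites Pareigis--Sweedler for this) and transfer it along the left adjoints of the faithful forgetful functors from Theorems \ref{th:free1} and \ref{th:free3}. The only cosmetic difference is that you spell out the finite-dimensionality argument and the adjunction transfer explicitly and keep a generating family before forming the coproduct, whereas the paper works directly with the single coproduct generator.
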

\begin{proof}
We denote by $\overline{F}_{1}:$ k-CoAlg $\to k$-BiAlgPoiss and
$\overline{F}:$ k-CoAlg $\to k$-PoissHopf the left adjoint
functors of $F_{1}: $k-BiAlgPoiss $\to k$-CoAlg and respectively
$F: k$-PoissHopf $\to k$-CoAlg. By \cite[Theorem 17]{PS}, the
category $k$-CoAlg of coalgebras has a generator $G$. Recall that
$G = \coprod_{x \in I} C_{x}$, where $I$ is the set of isomorphism
classes of finite dimensional coalgebras over $k$ while $C_{x}$ is
a coalgebra in the isomorphism class of $x \in I$. Now since both
forgetful functors $F_{1}$ and $F$ are faithful it follows that
$\overline{F}_{1}(G)$ and $\overline{F}(G)$ are generators in the
categories of Poisson bialgebras and respectively Poisson Hopf
algebras.
\end{proof}

\section*{Some comments and open problems}\selabel{4}

\quad The main results of this paper are \thref{1.3} and
\thref{free3} which prove the cocompleteness of the categories
$k$-BiAlgPoiss and $k$-PoissHopf, and respectively the existence
of a free Poisson Hopf algebra on every coalgebra. In order to
complete the categorical picture drawn by the above results, it is
natural to ask the following:

\textbf{Question $1$:} Are the categories $k$-BiAlgPoiss and
respectively $k$-PoissHopf complete (i.e. do they have arbitrary
products and equalizers)?

\quad It is worth pointing out that, if limits in $k$-BiAlgPoiss
and respectively $k$-PoissHopf do exist, then \thref{free3} tells
us that they should be constructed as simply the limits of the
underlying coalgebras.

\textbf{Question $2$:} Does there exist a cofree Poisson Hopf
algebra on every Poisson algebra, respectively on every Poisson
bialgebra? (i.e. do the forgetful functors $U : k$-PoissHopf $\to
k$-Poiss and respectively $\overline{U} : k$-PoissHopf $\to
k$-BiAlgPoiss have right adjoints?)

\quad Another important issue, also related to the existence of
cofree objects in the above mentioned categories, is the
injectivity (resp. surjectivity) of monomorphisms (resp.
epimorphisms). It was proven in \cite{AC1} that monomorphisms
(resp. epimorphisms) in the category $k$-HopfAlg of Hopf algebras
are not necessarily injective (reps. surjective) maps. This was
achieved by noticing that the antipode of any Hopf algebra is both
a monomorphism and an epimorphism in the category $k$-HopfAlg of
Hopf algebras together with the well known fact that there exist
Hopf algebras with non-injective, respectively non-surjective
antipode. Although the result in \cite{AC1} holds true in the
category $k$-PoissHopf of Poisson Hopf algebras as well, it has no
implications on the injectivity (resp. surjectivity) of
monomorphisms (resp. epimorphisms) due to the fact that all
Poisson Hopf algebras have bijective antipodes as a consequence of
being commutative Hopf algebras.

\quad As noted above, we do not know whether epimorphisms in the
aforementioned categories are surjective maps. This information is
of particular interest mainly in connection to the Special Adjoint
Functor Theorem (see \cite[Corollary p. 130]{ML}) which, under
some additional assumptions, provides necessary and sufficient
conditions for a functor to be a left (respectively right)
adjoint. Having a positive answer to the question of whether
epimorphisms in $k$-PoissHopf are surjective maps would also
settle the co-wellpoweredness problem into positive and therefore
all the assumptions required for applying the Special Adjoint
Functor Theorem would be fulfilled. Thus, the forgetful functors
$U$ and $\overline{U}$ would have right adjoints and therefore we
would be able to construct the cofree Poisson Hopf algebra
generated by a Poisson algebra, respectively a Poisson bialgebra.
On the other hand, a negative answer to the question of whether
epimorphisms are surjective maps in $k$-PoissHopf it has no
implications on the co-wellpoweredness of the category
$k$-PoissHopf: for instance, the category $k$-HopfAlg of Hopf
algebras is co-wellpowered although epimorphisms are not
necessarily surjective maps (see \cite{A2, AC1, HP}).


\begin{thebibliography}{99}

\bibitem{AR}
Ad\'amek, J., Rosick\'y, J. - Locally Presentable and Accessible
Categories, Cambridge University Press, 1994

\bibitem{A1}
Agore, A.L. - Limits of coalgebras, bialgebras and Hopf algebras,
{\sl Proc. Amer. Math. Soc.}, {\bf 139} (2011), 855--863

\bibitem{A2}
Agore, A.L. - Categorical constructions for Hopf algebras, {\sl
Comm. Algebra.}, {\bf 39} (2011), 1476--1481

\bibitem{AC1}
Chirvasitu, A. - On epimorphisms and monomorphisms of Hopf
algebras, {\sl J. Algebra}, {\bf 323} (2010), 1593--1606

\bibitem{AC2}
Chirvasitu, A. - Categorical aspects of compact quantum groups,
{\sl Appl. Categ. Structures}, DOI 10.1007/s10485-013-9333-8,
arXiv:1208.5193.

\bibitem{Dri}
Drinfeld, V. - Quantum groups, {\sl Proc. Intern. Congr. Math.,
Berkeley}, Vol. I (1987), 798 -- 820

\bibitem{Gav1}
Gavarini, F. - Poisson geometrical symmetries associated to
non-commutative formal diffeomorphisms, {\sl Comm. Math. Phys.}
{\bf 253} (2005), 121--155


\bibitem{Kaw}
Kawamura, K. - Inductive limit violates quasi-cocomutativity, {\sl
Appl. Cat. Structures} {\bf 21} (2013), 837–-849

\bibitem{KS}
Korogodski, L.I. and Soibelman, Y.S. - Algebras of functions on
quantum groups, Part I, Mathematical surveys and monographs {\bf
56}, AMS 1998

\bibitem{LPV}
Laurent-Gengoux, C., Pichereau, A. and Vanhaecke, P. - Poisson
structures, Grundlehren der mathematischen Wissenschaften {\bf
347} Springer, Heidelberg, 2013

\bibitem{LWZ}
Lu, J., Wang, X. and Zhuang, G. - Universal enveloping algebras of
Poisson Hopf algebras, preprint, arXiv:1402.2007

\bibitem{ML}
Mac Lane, S. - Categories for the working mathematician, GTM 5,
Springer, New York (1998)

\bibitem{freePo}
Makar-Limanov, L. and Shestakov, I. - Polynomial and Poisson
dependence in free Poisson algebras and free Poisson fields, {\sl
J. Algebra}, {\bf 349} (2012), 372--379

\bibitem{P}
Pareigis, B. - Lectures on quantum groups and noncommutative
geometry. Available at:
http://www.mathematik.uni-muenchen.de/~pareigis/Vorlesungen/02SS/QGandNCG.pdf

\bibitem{par}
Pareigis, B. - Advanced Algebra. Available at:

http://www.mathematik.uni-muenchen.de/~pareigis/Vorlesungen/01WS/advalg.pdf

\bibitem{PS}
Pareigis, B., Sweedler, M.E. - On generators and cogenerators,
{\sl Manuscripta Math.}, {\bf 2} (1970), 49--66

\bibitem{HP}
Porst, H.-E. - Universal constructions for Hopf algebras, {\sl J.
Pure Appl. Algebra}, {\bf 212} (2008), 2547--2554

\bibitem{HP2}
Porst, H.-E. - Takeuchi's free Hopf algebra construction
revisited, {\sl J. Pure Appl. Algebra}, {\bf 216} (2012),
1768–-1774

\bibitem{HP3}
Porst, H.-E. - The Formal Theory of Hopf Algebras, Part II,
preprint 2013.

\bibitem{Sw}
Sweedler, M.E. - Hopf  Algebras,  Benjamin New York, 1969

\bibitem{T}
Takeuchi, M. - Free Hopf algebras generated by coalgebras, {\sl J.
Math. Soc. Japan} {\bf 23 }(1971), 561--582

\bibitem{Ta}
Tanisaki, T. - Poisson Hopf algebras associated to quantized
enveloping algebras, preprint, arXiv:0802.1590v2

\bibitem{RW}
Wisbauer, R. - Categorical aspects of Hopf algebras, in: Quantum
groups and noncommutative spaces, Vieweg und Teubner (2011),
146--163

\bibitem{zhu}
Zhuang, G. - Properties of connected Hopf algebras of finite Gelfand-Kirillov dimension, {\sl J. London Math. Soc.},
{\bf 87}(2013), 877-898



\end{thebibliography}
\end{document}